\def\subjclass#1{{\renewcommand{\thefootnote}{}%
\footnote{\emph{Mathematics Subject Classification (2010):} #1}}}
\DeclareMathOperator{\curl}{curl} \DeclareMathOperator{\dv}{div}
\DeclareMathOperator{\supp}{supp}
\date{\today}
\theoremstyle{plain}
\newtheorem{Thm}{Theorem}
\newtheorem{Rem}[Thm]{Remark}
\newtheorem{Lem}[Thm]{Lemma}
\newcommand {\p}{\partial}
\newcommand{\q}{\quad}
\newcommand{\qq}{\qquad}
\def\O{\Omega}
\def\E{\mathbf E}
\def\e{\mathbf e}
\def\n{\mathbf n}
\def\r{\mathbf r}
\def\u{\mathbf u}
\def\w{\mathbf w}
\def\v{\vskip}
\numberwithin{equation}{section}
\numberwithin{Thm}{section}
\begin{document}
\large
%Topmatter

\title[Hardy-type inequalities for vector fields in bounded domains]
{Hardy-type inequalities for vector fields
with the tangential components vanishing}

\author[]{Xingfei Xiang}
\author[]{Zhibing Zhang}

\address{Xingfei Xiang: Department of Mathematics,
Tongji University, Shanghai 200092, P.R. China; }
\email{xiangxingfei@126.com}

\address{Zhibing Zhang: Department of Mathematics,
East China Normal University, Shanghai 200241, P.R. China; }
\email{zhibingzhang29@126.com}

\thanks{ }

\keywords{$\dv$, $\curl$, Hardy-type inequality, $L^1-$data}

\subjclass{35A23; 46E30; 46E40}

\begin{abstract}
This note studies the Hardy-type inequalities
 for vector fields with the $L^1$ norm of the $\curl$.
In contrast to the well-known results in the
whole space for the divergence-free vectors,
we generalize the Hardy-type inequalities
to the bounded domains and to the
non-divergence-free vector fields with the tangential
components on the boundary vanishing.

\end{abstract}
\maketitle
%end topmatter

\section{Introduction}

This note is devoted to establish the Hardy-type
inequality for vector fields in $L^1$ space
in 3-dimensional bounded domains.
 We prove that for the vector field $\u$
 with the tangential
components on the boundary vanishing,
 the $L^1$ norm of $\u/|x|$ can be controlled
 by the $L^1$ norm of $(1+\ln |x|)\dv\u$
 and the $L^1$ norm of $\curl\u$.

This work belongs to the field of the $L^1$ estimate
for vector fields.
Starting with the pioneering work
by Bourgain and Brezis in \cite{BB1},
the $L^1$ estimate has been well studied by
many mathematicians, see [2-7, 10-12, 15, 16, 18]
and the references therein.
In particular, Bourgain and Brezis in \cite{BB}
obtained the delicate $L^{3/2}$ estimate
for the divergence-free vectors on
 the torus $\mathbb{T}^3$.
Maz'ya in \cite{MA} (also see Bousquet
and Van Schaftingen's more general result in \cite{BS} by introducing
the cancellation condition)
obtained a Hardy-type inequality
for the divergence-free vector
fields $\u$ (not direct but implied)
\begin{equation}\label{1.2}
\left\|\frac{\mathbf u}{|x|}\right\|_{L^1(\mathbb{R}^3)}
\leq C\|\curl\mathbf \u\|_{L^1(\mathbb{R}^3)}.
\end{equation}
This actually gives the essential answer to
the problem raised by Bourgain and Brezis
in \cite[open problem 1]{BB}. Bousquet and Mironescu
give an elementary proof of \eqref{1.2} in \cite{BM}.

In this note we consider the problem in bounded domains,
in particular with the singularity
(the origin) being on the boundary.
For the case of the singularity being in the
interior of the domain,  we can easily get the
similar estimate \eqref{1.2} in bounded domains
by taking the cut-off method.
However, if the singularity is on the boundary,
the usual flattening boundary and the
localization by partition of unity does not work.
The main reason is that by taking the
flattening boundary there would arise the $L^1$
norm of $\nabla \u$, this term can't be
controlled by the $L^1$ norm of $\curl\u$
for the vector $\u$ being divergence-free, and hence
a new approach should be considered.

To get around this difficulty we apply  the
Helmholtz-Weyl decomposition for vector fields
 in bounded domains
(see \cite[Theorem 2.1]{KY}):
$$\u=\curl\mathbf w_{\u}+\nabla p_{\u}+\mathscr{H}_{\u},
$$
where $\mathscr{H}_{\u}$ is the harmonic part depending only
on the domain. Our strategy is to get the estimates
for the curl part $\curl\mathbf w_{\u}$
and the gradient part $\nabla p_{\u}$ respectively.
Thanks to Solonnikov's work in \cite{SO1,SO2} (also see
Beir\~{a}o da Veiga and Berselli's work in \cite[p.606]{BVB}),
the vector $\mathbf w_{\u}$ in the curl part satisfies
Petrovsky type elliptic system, and hence there exists a single
Green's matrix $\mathcal {G}(x,y)$ such that
$$\w(x)=\int_{\O} \mathcal {G}(x,y) \curl\u(y) dy.
$$
Based on the estimate for the Green's matrix,
we can obtain the estimate on the curl part.
For the gradient part $\nabla p_{\u}$ we can use the
classical elliptic theory to get
the estimate.

Before stating the main result, we make the
following assumption on the domain:

 (O)\, $\O$ is a bounded in
$\mathbb{R}^3$ with smooth boundary,
in all cases considered here
the class $C^2$ will be sufficient.
The second Betti number is $0$ which
is understood as there is no
holes in the domain.

Denote by $\nu(x)$ the unit outer normal vector at
$x\in \p\O.$ The main result now reads:

\begin{Thm}\label{thm1.1}
Assume that the domain $\O$ satisfies the assumption $(O)$.
Then for any $\u\in C^{1}(\bar{\O}, \mathbb{R}^3)$
with $\nu\times\u=0$ on $\p\O,$  we have
\begin{equation}\label{1.6}
\left\| \frac{\u}{|x|}\right\|_{L^{1}(\O)}
\leq C\left(\left\|\ln |x|\dv\u\right\|_{L^1(\O)}
+\|\curl\u\|_{L^{1}(\O)}\right),
\end{equation}
where the constant $C$ depends only on the domain $\O$.
\end{Thm}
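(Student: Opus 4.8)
The plan is to follow the Helmholtz--Weyl route sketched above and split the weighted norm into a curl part and a gradient part. I would first apply the decomposition $\u=\curl\w_{\u}+\nabla p_{\u}+\mathscr{H}_{\u}$ of \cite[Theorem 2.1]{KY}. The harmonic field $\mathscr{H}_{\u}$ lies in a finite-dimensional space whose dimension is a topological invariant of $\O$; under assumption $(O)$ (no holes) this space is trivial, so $\mathscr{H}_{\u}\equiv 0$ and only two terms survive. Because $\dv\curl\w_{\u}=0$, the potential solves $\Delta p_{\u}=\dv\u$ with boundary data inherited from $\nu\times\u=0$ through the decomposition, while $\w_{\u}$ solves Solonnikov's Petrovsky-type system with Green's matrix $\mathcal G$. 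It then suffices to prove the two estimates
\begin{equation}\label{split}
\left\|\frac{\curl\w_{\u}}{|x|}\right\|_{L^1(\O)}\le C\|\curl\u\|_{L^1(\O)},\qquad
\left\|\frac{\nabla p_{\u}}{|x|}\right\|_{L^1(\O)}\le C\bigl\|(1+|\ln|x||)\dv\u\bigr\|_{L^1(\O)}.
\end{equation}

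For the gradient part I would use the representation $\nabla p_{\u}(x)=\int_{\O}\nabla_x G(x,y)\,\dv\u(y)\,dy$ (with boundary contributions handled the same way) together with the standard bound $|\nabla_x G(x,y)|\le C|x-y|^{-2}$. By Fubini the weighted norm is controlled by the scalar kernel integral
\begin{equation}\label{riesz}
\int_{\O}\frac{dx}{|x|\,|x-y|^{2}}\le C\bigl(1+|\ln|y||\bigr),
\end{equation}
a critical Riesz potential that one checks elementarily by splitting $\O$ into $\{|x|<|y|/2\}$, $\{|x-y|<|y|/2\}$ and the intermediate dyadic shells, where the shells produce the logarithm. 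This delivers exactly the logarithmically weighted divergence term and needs no cancellation; it is the soft part of the argument.

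The crux is the curl part, for which the same pointwise estimate fails: inserting $|\nabla_x\mathcal G(x,y)|\le C|x-y|^{-2}$ into $\curl\w_{\u}(x)=\int_{\O}\curl_x\mathcal G(x,y)\,\curl\u(y)\,dy$ reproduces \eqref{riesz} and hence loses a forbidden factor $1+|\ln|y||$ on $\curl\u$. Removing this logarithm is the main obstacle. The key observation is that $\mathbf v:=\curl\w_{\u}$ is divergence free with $\curl\mathbf v=\curl\u$, so the desired bound is exactly the bounded-domain analogue of \eqref{1.2}; the gain must come from the antisymmetric (cross-product) structure of $\curl_x\mathcal G$, which is precisely the cancellation exploited in \cite{MA,BM,BS}. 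One cannot merely extend $\mathbf v$ to $\mathbb R^3$ and quote \eqref{1.2}, since extending the datum by zero destroys its divergence-free character; instead I would split $\mathcal G(x,y)=\Phi(x-y)+\mathcal R(x,y)$ into the fundamental solution $\Phi$ and a regular remainder $\mathcal R$. The remainder is harmless because $\nabla_x\mathcal R$ is bounded and $\int_{\O}|x|^{-1}\,dx<\infty$, while $\curl_x\Phi$ is the free-space Biot--Savart kernel, on which the Maz'ya--Bousquet--Mironescu cancellation upgrades the critical, log-divergent pointwise bound to the unweighted control by $\|\curl\u\|_{L^1(\O)}$. Establishing that Solonnikov's Green's matrix is regular enough for this cancellation to run in $\O$, rather than on $\mathbb R^3$, is the technical heart of the proof; combining the two estimates in \eqref{split} then yields \eqref{1.6}.
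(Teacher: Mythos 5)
Your overall architecture coincides with the paper's: Helmholtz--Weyl decomposition, a log-weighted divergence bound for the gradient part via the kernel estimate $|\nabla_xG(x,y)|\le C|x-y|^{-2}$ and Fubini, and a Green's-matrix representation of the curl part to which a Maz'ya-type cancellation must be applied. The gradient half of your argument is essentially the paper's. The problem is in the curl half, and it is a genuine gap rather than a stylistic difference. You assert that ``extending the datum by zero destroys its divergence-free character'' and on that basis reject the extension route. This assertion is false precisely under the hypothesis $\nu\times\u=0$: the normal component $\nu\cdot\curl\u$ on $\p\O$ is the surface curl of the tangential trace of $\u$, hence vanishes when $\nu\times\u=0$, so the zero extension of $\curl\u$ to $\mathbb R^3$ \emph{is} divergence-free in the sense of distributions (equivalently, $\curl\tilde\u$ equals the zero extension of $\curl\u$ with no surface term). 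This observation is exactly the paper's key step (its Lemma 2.4, and the point emphasized in Remark 1.2(i)); by discarding it you discard the mechanism that makes the theorem work for tangential, but not normal, boundary conditions.

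Your substitute does not close the gap. First, it is internally inconsistent: the Maz'ya--Bousquet--Mironescu cancellation for the Biot--Savart kernel is a whole-space statement requiring a divergence-free $L^1$ field on all of $\mathbb R^3$, so you would still need the very extension you claim is unavailable. Second, the remainder $\mathcal R=\mathcal G-\Phi$ of Solonnikov's Green's matrix is not a kernel with bounded gradient up to the boundary; the available estimates (see \eqref{4.15}--\eqref{4.16}) only give pointwise decay of the critical order for first derivatives, so ``harmless because $\nabla_x\mathcal R$ is bounded'' is unjustified, and the integral $\int_\O|x|^{-1}\,dx<\infty$ does not by itself control a kernel of critical strength. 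Third, you explicitly defer the ``technical heart.'' The paper resolves all three issues at once with a single kernel lemma (its Lemma 2.1): for any kernel $A$ with $|A(x,y)|\le C|x-y|^{-2}$ and $|\nabla_yA(x,y)|\le C|x-y|^{-3}$, and any divergence-free $\mathbf\Psi\in L^1(\mathbb R^3)$, one has $\bigl\||x|^{-1}\int A\mathbf\Psi\,dy\bigr\|_{L^1(\O)}\le C\|\mathbf\Psi\|_{L^1}$, proved by a dyadic splitting in which the region $|y|<|x|/4$ is handled by integrating by parts against $\dv\mathbf\Psi=0$ to gain a factor $|y|/|x|$. This lemma applies to the full Green's matrix (leading term plus remainder) and to the zero-extended $\curl\u$, which is where the hypothesis $\nu\times\u=0$ enters. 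To repair your proof you should reinstate the zero-extension step and either prove or quote such a kernel lemma.
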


\begin{Rem} We need to mention that
\begin{itemize}
\item[(i)] The proof method of this theorem is not applicable for the vector fields
with the normal components on the boundary vanishing. The reason is that
the key step we used is the zero extension of $\curl\u$
outside of the domain, but this does not hold for the vector fields with the normal
components on the boundary vanishing.

\vspace {0.2cm}

\item[(ii)] By a similar discussion, one can get the estimate
for elliptic system associating with the Hardy-type inequality.
Let $\mathbf f\in C^{1}(\bar{\O},\mathbb R^3)$ with
 $\dv\mathbf f=0$ in $\O$ and $\nu\cdot\mathbf f=0$ on
the boundary. Then for the elliptic system $\mathscr{L} \u=\mathbf f$
with the form of the solution can be expressed by
$$ \u=\int_{\O} \mathcal {G}(x,y) \mathbf f(y) dy,
$$
where the Green's matrix $\mathcal {G}(x,y)$ satisfy the inequality \eqref{6.1},
 we have
$$\left\| \frac{\u}{|x|^2}\right\|_{L^{1}(\O)}
\leq C \left\|\mathbf f\right\|_{L^1(\O)}.
$$
However, this may not be true for the single elliptic equation. The typical example is
$\mathscr{L}=\Delta$ with zero boundary condition.
\end{itemize}
\end{Rem}

The organization of this paper is as follows. In Section 2,
we will give the  proof of Theorem \ref{thm1.1}.
In Section 3, the Hardy-type inequality in $L^p$
space with $1<p<3/2$ will be treated.
We will show that the $L^p$ norm of $\u/|x|$
can be controlled by the $L^p$ norm of the
$\dv\u$ and the $\curl\u$ whether or not the singularity
is on the boundary.

Throughout the paper, the bold typeface is used to
 indicate vector quantities; normal typeface will
 be used for vector components and for scalars.
\v0.1in

\section{Hardy-type inequalities for vector fields with $L^1$ data}\label{section2}

The key step in the proof of the main theorem
is the estimate for the singular integral involving
the operator curl. This estimate was first obtained
by Maz'ya in \cite{MA} in the case of the kernel
being the Newtonian potential and of the domain being
the entire space. The case where the kernel being
the Green's function associating with the elliptic
operator in the entire space
 was considered by Bousquet and Van Schaftingen
 (see \cite[Lemma 2.2]{BS}).
We generalized their kernel to a more general case.
The method of our proof  goes back to work
by Bousquet and Mironescu in \cite{BM}.

\begin{Lem}\label{lem4.1}
Suppose that the function
$A(x,y)\in C^1(\O\times\mathbb R^3)$
for $x\neq y$
satisfying
\begin{equation}\label{6.1}
(i)\q |A(x,y)|\leq \frac{C}{|x-y|^2};\q (ii)\q \left|\nabla_y A(x,y)\right|\leq \frac{C}{|x-y|^3}.
 \end{equation}
Let $\mathbf\Psi\in L^1(\mathbb R^3, \mathbb R^3)$ with $\dv\mathbf\Psi=0.$ Then there exists a constant $C$ such that
\begin{equation}\label{4.1}
\left\| \frac{1}{|x|}\int_{\mathbb{R}^3}A(x,y) \mathbf\Psi (y)dy\right\|_{L^{1}(\O)}
\leq C\left\|\mathbf\Psi\right\|_{L^1(\mathbb{R}^3)}.
\end{equation}
\end{Lem}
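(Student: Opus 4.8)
The plan is to exploit the constraint $\dv\mathbf\Psi=0$ through the cancellation that solenoidal fields enjoy, following the circle of ideas of Maz'ya and of Bousquet and Mironescu. Writing $\mathbf u(x):=\int_{\mathbb R^3}A(x,y)\mathbf\Psi(y)\,dy$, the guiding principle is that, for a divergence-free $L^1$ field, the integral $\int\mathbf\Psi\cdot\mathbf V$ against any vector field $\mathbf V$ is unchanged if $\mathbf V$ is altered by a gradient (after a mollification legitimizing the integration by parts, $\int\mathbf\Psi\cdot\nabla\varphi=0$ for every $\varphi$ with bounded gradient). One clean way to turn this into pointwise gains, and to make the two hypotheses on $A$ visible, is to decompose $\mathbf\Psi$ into elementary loop currents via Smirnov's decomposition, $\mathbf\Psi=\int[\gamma]\,d\mu(\gamma)$ with $\|\mathbf\Psi\|_{L^1}\simeq\int\ell(\gamma)\,d\mu(\gamma)$, where $\ell(\gamma)$ denotes the length of the closed curve $\gamma$. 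By Minkowski's integral inequality it then suffices to prove the single-loop estimate
\begin{equation*}
\int_\Omega\frac{1}{|x|}\left|\oint_\gamma A(x,y)\,d\mathbf y\right|dx\le C\,\ell(\gamma),
\end{equation*}
since integrating it against $d\mu$ and using the mass equality returns the right-hand side of \eqref{4.1}. (Equivalently, one may dualize against $\mathbf g\in L^\infty(\Omega,\mathbb R^3)$ and show that $\mathbf B(y):=\int_\Omega\frac{\mathbf g(x)}{|x|}A(x,y)\,dx$ lies at bounded $L^\infty$-distance from the gradients; the loop formulation is the concrete realization of this.)

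The heart of the argument is the single-loop bound. The crucial structural fact is that $\gamma$ is closed, so for any base point $y_0\in\gamma$ we may write $\oint_\gamma A(x,y)\,d\mathbf y=\oint_\gamma\big(A(x,y)-A(x,y_0)\big)\,d\mathbf y$; this is precisely the mechanism converting the divergence-free structure into a pointwise gain. I would then split $\Omega$ according to whether $x$ is near or far from $\gamma$. On the near region $\{\mathrm{dist}(x,\gamma)\le 2\,\mathrm{diam}\,\gamma\}$ I would use hypothesis $(i)$ directly, $|A(x,y)|\le C|x-y|^{-2}$, and note that since $\gamma$ is contained in a ball of radius $\mathrm{diam}\,\gamma$ about each of its points, the weighted integral $\int \frac{dx}{|x|\,|x-y|^{2}}$ over that ball is scale-invariant and uniformly bounded; by Tonelli this contribution is $\le C\,\ell(\gamma)$. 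On the far region I would instead combine the closed-loop cancellation with hypothesis $(ii)$, estimating $|A(x,y)-A(x,y_0)|\le \mathrm{diam}(\gamma)\,\sup_\gamma|\nabla_yA|\le C\,\mathrm{diam}(\gamma)\,\mathrm{dist}(x,\gamma)^{-3}$, which reduces matters to $\mathrm{diam}(\gamma)\int_{\mathrm{dist}(x,\gamma)\ge 2\,\mathrm{diam}\,\gamma}\frac{dx}{|x|\,\mathrm{dist}(x,\gamma)^{3}}$.

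The main obstacle I anticipate is keeping every constant uniform in the size and position of $\gamma$. In the far estimate the last integral is of order $(\mathrm{diam}\,\gamma)^{-1}$ but carries a logarithmic factor $\log(|y_0|/\mathrm{diam}\,\gamma)$ when $\gamma$ is far from the origin; multiplied by the prefactor $\mathrm{diam}(\gamma)$ this produces a term of the form $t\log(1/t)$ with $t=\mathrm{diam}(\gamma)/|y_0|\in(0,1)$, which is bounded, so the far contribution is again $\le C\,\ell(\gamma)$. The remaining technical points—Smirnov's decomposition with mass equality, the use of Minkowski's inequality for the superposition, and the integration by parts justifying the cancellation for a merely $L^1$ field $\mathbf\Psi$—I would dispatch by a routine mollification of $\mathbf\Psi$, which preserves the divergence-free condition and does not increase the $L^1$ norm, followed by a passage to the limit.
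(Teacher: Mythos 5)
Your route is genuinely different from the paper's. The paper splits the $y$--integral into the regions $\{|y|>2|x|\}$, $\{|x|/4<|y|<2|x|\}$ and $\{|y|<|x|/4\}$, handles the first two by Fubini and hypothesis $(i)$ alone, and uses $\dv\mathbf\Psi=0$ only on the last region, via the identity $\int_{\mathbb R^3}\dv\bigl(y_iA(x,y)\eta(|y|/|x|)\mathbf\Psi\bigr)\,dy=0$ with a cutoff $\eta$, which after one integration by parts trades $A$ for $\nabla_y(A\eta)$ and produces the decisive extra factor $|y|/|x|$. Your single-loop computation, by contrast, is the Maz'ya--Smirnov circle of ideas, and it is correct as far as it goes: the near-region bound $\int_{B(y,3\,\mathrm{diam}\,\gamma)}|x|^{-1}|x-y|^{-2}\,dx\le C$ is uniform by scaling, and in the far region the closed-loop cancellation combined with $(ii)$ does produce, after dyadic summation, only the harmless $t\log(1/t)$ factor you identify.

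The genuine gap is the reduction to loops. Smirnov's decomposition of a divergence-free $L^1$ field (or finite-mass solenoidal charge) yields \emph{elementary solenoids} --- weak-star limits of normalized currents $\tfrac{1}{2t}[\theta|_{[-t,t]}]$ carried by possibly unbounded, non-closed $1$-Lipschitz curves --- with mass additivity; it does \emph{not} in general yield closed rectifiable loops with $\|\mathbf\Psi\|_{L^1}\simeq\int\ell(\gamma)\,d\mu(\gamma)$. Your entire near/far dichotomy is calibrated on $\mathrm{diam}\,\gamma$ and $\ell(\gamma)$, quantities that are infinite or uncontrolled for an elementary solenoid, so the single-object estimate does not transfer as stated. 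The strategy can be repaired (close up each truncated curve $\theta|_{[-t,t]}$ by a segment of length at most $|\theta(t)-\theta(-t)|\le 2t$, apply your loop estimate to the resulting closed curve, and pass to the limit using lower semicontinuity of the left-hand functional), but that is precisely the step your concluding mollification remark does not supply: mollifying $\mathbf\Psi$ preserves $\dv\mathbf\Psi=0$ and the $L^1$ bound, but it does not produce a loop decomposition. As written, the proof rests on a decomposition theorem that does not exist in the form you cite, and this must be filled in before the argument is complete.
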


\begin{proof}
For simplicity of the notations, we let
$$
I_{\Sigma}:=\int_{\Sigma}A(x,y)\mathbf\Psi(y) dy.
$$
Then write
\begin{equation}\label{4.3}
\int_{\mathbb R^3}
 A(x,y) \mathbf\Psi(y) dy\\
 =I_{{\{|y|>2|x|\}}}+I_{\left\{\frac{|x|}{4}<|y|<2|x|\right\}}
 +I_{\left\{|y|<\frac{|x|}{4}\right\}}.
\end{equation}

The estimation of these integrals are achieved as follows.
For all $x,y$ satisfying $2|x|<|y|,$
the inequality $|x-y|\geq |y|/2$ holds, then using Fubini's theorem, we obtain
\begin{equation}\label{4.5}
\aligned
\int_{\mathbb R^3}\frac{1}{|x|}\left| I_{{\{|y|>2|x|\}}}\right|dx
&\leq \int_{\mathbb R^3}|\mathbf\Psi(y)|
\int_{\left\{|x|<\frac{|y|}{2}\right\}\bigcap\O}|A(x,y)|\frac{1}{|x|} dxdy\\
&\leq C\int_{\mathbb R^3}|\mathbf\Psi(y)|\frac{1}{|y|^2}
\int_{\left\{|x|<\frac{|y|}{2}\right\}}\frac{1}{|x|} dxdy.
\endaligned
\end{equation}
It is easy to see that the last term of the above inequality
can be controlled by the $L^1$ norm of $\mathbf\Psi(x).$
We now estimate the second term in the right side of \eqref{4.3}.
 Using Fubini's theorem again, it follows that
\begin{equation}\label{4.2}
\aligned
\int_{\mathbb R^3}\frac{1}{|x|}\left| I_{\frac{|x|}{4}<|y|<2|x|}\right|dx
&\leq \int_{\mathbb R^3}|\mathbf\Psi(y)|
\int_{\left\{\frac{|y|}{2}<|x|<4|y|\right\}}\frac{1}{|x||x-y|^2} dxdy\\
&\leq C\int_{\mathbb R^3}|\mathbf\Psi(y)|
\int_{\left\{|x-y|<5|y|\right\}}\frac{1}{|y||x-y|^2} dxdy,
\endaligned
\end{equation}
the last term in the above inequality can also be controlled
by $\|\mathbf\Psi(x)\|_{L^1(\mathbb R^3)}.$

We now estimate the integral involving the term $I_{\{|y|<|x|/4\}}.$
Take the cut-off function $\eta(t)$ such that $\eta(t)=0$ for $t>1/2,$ $\eta(t)=1$
for $0<t<1/4,$ and $|\eta'(t)|\leq 8.$ Applying the equality
$$\int_{\mathbb R^3} \dv(y_i A(x,y)\eta(|y|/|x|)\mathbf\Psi(y) )dy
=0\q\q \text{for } i=1,2,3,
$$
and then using $\dv\mathbf\Psi=0$ we have
$$
\aligned
  &\int_{\mathbb R^3}  A(x,y)\eta(|y|/|x|)\mathbf\Psi(y) dy\\
=&
-\int_{\mathbb R^3}  \nabla_y(A(x,y)\eta(|y|/|x|))
\cdot\mathbf\Psi(y)(y_1,y_2,y_3) dy.
\endaligned
$$
The conditions (i) and (ii), for $|y|<|x|/2,$ imply that
$$
|\nabla_y(A(x,y)\eta(|y|/|x|))|
\leq C\left(\frac{1}{|x-y|^3}+\frac{1}{|x||x-y|^2}\right)
\leq C\frac{1}{|x|^3}.
$$
This shows that
\begin{equation}\label{4.4}
\int_{\O}\frac{1}{|x|}\left|\int_{\mathbb R^3}
A(x,y)\eta(|y|/|x|)\mathbf\Psi(y) dy \right|dx
\leq C \int_{\mathbb R^3}|\mathbf\Psi(y)|
\int_{\left\{|x|>2|y|\right\}}\frac{|y|}{|x|^4} dxdy.
\end{equation}
The last term in the above inequality can be controlled
by $\|\mathbf\Psi(x)\|_{L^1(\mathbb R^3)}.$
Noting that
$$
\aligned
&\int_{\mathbb R^3}  A(x,y)\eta(|y|/|x|)\mathbf\Psi(y) dy\\
=&I_{\left\{|y|<\frac{|x|}{4}\right\}}
+\int_{\left\{\frac{|x|}{4}<|y|<\frac{|x|}{2}\right\}}
 A(x,y)\eta(|y|/|x|)\mathbf\Psi(y) dy.
\endaligned
$$
The estimation involving the last term of the above equality
can be obtained by \eqref{4.2}. Thus from \eqref{4.4} it follows that
$$
\int_{\O}\frac{1}{|x|}\left| I_{\left\{|y|<\frac{|x|}{4}\right\}}\right|dx
\leq C\|\mathbf\Psi(x)\|_{L^1(\mathbb R^3)}.
$$
Plugging \eqref{4.5}, \eqref{4.2} and the above inequality
back to \eqref{4.3} we obtain \eqref{4.1}. We finish our proof.
\end{proof}

\begin{Rem} It is easy to see that this lemma is still true
if replaced the scalar function
$A(x,y)$ by a matrix $\mathcal {G}(x,y)$
satisfying the inequalities in \eqref{6.1}.
\end{Rem}

By a similar discussion of Lemma \ref{lem4.1}
and applying Fubini's theorem, we can get the
estimate of the singular integral for scalar functions.

\begin{Lem}\label{lemma2.2}
Suppose that the function $\ln |x|\Psi\in L^1(\O)$
and assume that the function
$A(x,y)\in C^1(\bar{\O}\times\bar{\O})$
for $x\neq y$ satisfying the inequalities in \eqref{6.1}.
Then there exists a constant $C$ depending only on the domain
such that
\begin{equation}\label{2.1}
\left\| \frac{1}{|x|}\int_{\O}A(x,y)
\Psi(y) dy\right\|_{L^{1}(\O)}
\leq C\left\|(1+\ln |x|)\Psi\right\|_{L^1(\O)}.
\end{equation}
\end{Lem}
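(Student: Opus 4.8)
The plan is to reuse the three–region decomposition from the proof of Lemma \ref{lem4.1}, the only genuine difference being that $\Psi$ is now a scalar and carries no divergence–free structure to exploit. Setting $I_\Sigma:=\int_\Sigma A(x,y)\Psi(y)\,dy$, I would split the integral over $\O$ exactly as in \eqref{4.3} into the outer piece $\{|y|>2|x|\}$, the annular piece $\{|x|/4<|y|<2|x|\}$, and the inner piece $\{|y|<|x|/4\}$, and estimate the weighted $L^1(\O)$ norm of each.

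For the outer and annular pieces the argument is literally that of \eqref{4.5} and \eqref{4.2}: those computations never invoked $\dv\mathbf\Psi=0$, using only the pointwise bound (i) of \eqref{6.1}, the elementary comparisons $|x-y|\ge|y|/2$ on $\{|y|>2|x|\}$ and the translation $x\mapsto x-y$ on the annulus, together with Fubini's theorem. Running them verbatim over the bounded domain $\O$ bounds these two contributions by $C\|\Psi\|_{L^1(\O)}$, with $C$ depending on $\O$ only through its diameter.

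The essential point is the inner piece $\{|y|<|x|/4\}$, which in Lemma \ref{lem4.1} was handled by integrating by parts and invoking $\dv\mathbf\Psi=0$; that device is no longer available for a scalar $\Psi$. Instead I would estimate it directly. On $\{|y|<|x|/4\}$ one has $|x-y|\ge 3|x|/4$, so (i) of \eqref{6.1} gives $|A(x,y)|\le C/|x|^2$, and Fubini yields
\[
\int_{\O}\frac1{|x|}\bigl|I_{\{|y|<|x|/4\}}\bigr|\,dx
\le C\int_{\O}|\Psi(y)|\int_{\{|x|>4|y|\}\cap\O}\frac{dx}{|x|^{3}}\,dy .
\]
In three dimensions the inner integral equals a constant multiple of $\ln(\mathrm{diam}\,\O/|y|)$, hence grows like $1+\bigl|\ln|y|\bigr|$. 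This is precisely the logarithmic loss recorded in \eqref{2.1}, and it explains why the weight $\ln|x|$ is unavoidable once the divergence–free hypothesis is dropped: the lost cancellation forces the crude bound $|A|\le C/|x|^2$, whose Fubini integral diverges logarithmically at the origin.

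Collecting the three estimates gives a bound by $C\|\Psi\|_{L^1(\O)}+C\int_\O|\Psi(y)|\bigl(1+\bigl|\ln|y|\bigr|\bigr)\,dy$, which is the right–hand side of \eqref{2.1}; since every constant originates from \eqref{6.1} and the size of $\O$, the resulting $C$ depends only on the domain. I expect the main obstacle to be entirely concentrated in the inner region: without the divergence–free cancellation one cannot do better than $|A|\le C/|x|^2$ there, and the one thing to check carefully is that the ensuing Fubini integral produces the weight $1+\bigl|\ln|y|\bigr|$ exactly, that is, a logarithm and nothing stronger.
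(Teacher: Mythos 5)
Your proposal is correct and is exactly the argument the paper intends: the paper omits the proof of Lemma \ref{lemma2.2}, stating only that it follows ``by a similar discussion of Lemma \ref{lem4.1} and applying Fubini's theorem,'' and your reconstruction fills this in faithfully — the outer and annular regions are verbatim \eqref{4.5} and \eqref{4.2}, while the inner region $\{|y|<|x|/4\}$ is estimated crudely via $|A(x,y)|\leq C/|x|^2$, whose Fubini integral $\int_{\{|x|>4|y|\}\cap\O}|x|^{-3}\,dx$ produces precisely the logarithmic weight $1+\bigl|\ln|y|\bigr|$ appearing in \eqref{2.1}. Your observation that the divergence-free cancellation is what the log weight replaces is the right way to understand the difference between the two lemmas.
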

For the vector $\mathbf\Psi=\curl \mathbf\Phi$ with $\nu\times\mathbf\Phi=0$ on the boundary,
the estimate in Lemma \ref{lemma2.2} can be improved.
\begin{Lem}\label{lem2.2}
Let $\mathbf\Phi\in C^1(\bar{\O}, \mathbb R^3)$
with $\nu\times\mathbf\Phi=0$ on the boundary. Suppose that the function
$A(x,y)\in C^1(\bar{\O}\times\bar{\O})$
for $x\neq y$
satisfying the inequalities in \eqref{6.1}, then
there exists a constant $C$ such that
\begin{equation}\label{2.2}
\left\| \frac{1}{|x|}\int_{\O}
A(x,y)\curl \mathbf\Phi dy\right\|_{L^{1}(\O)}
\leq C\left\|\curl \mathbf\Phi\right\|_{L^1(\O)}.
\end{equation}
\end{Lem}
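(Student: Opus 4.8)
The plan is to deduce Lemma \ref{lem2.2} from the already-proved Lemma \ref{lem4.1} by extending $\curl\mathbf\Phi$ by zero to all of $\mathbb{R}^3$. Let $\mathbf\Psi$ denote the field equal to $\curl\mathbf\Phi$ on $\O$ and to $0$ on $\mathbb{R}^3\setminus\bar\O$. Since $\mathbf\Phi\in C^1(\bar\O,\mathbb{R}^3)$ we have $\curl\mathbf\Phi\in C^0(\bar\O)\subset L^1(\O)$, so $\mathbf\Psi\in L^1(\mathbb{R}^3,\mathbb{R}^3)$ with $\|\mathbf\Psi\|_{L^1(\mathbb{R}^3)}=\|\curl\mathbf\Phi\|_{L^1(\O)}$. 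Because $\mathbf\Psi$ is supported in $\bar\O$, the left-hand side of \eqref{2.2} coincides with $\|\,|x|^{-1}\int_{\mathbb{R}^3}A(x,y)\mathbf\Psi(y)\,dy\,\|_{L^1(\O)}$, which is precisely the quantity estimated in Lemma \ref{lem4.1}. Hence, provided I can show that $\dv\mathbf\Psi=0$ on $\mathbb{R}^3$, Lemma \ref{lem4.1} applies verbatim and yields \eqref{2.2} at once.

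The heart of the matter is verifying that the zero extension $\mathbf\Psi$ is divergence-free on all of $\mathbb{R}^3$ in the distributional sense; this is exactly the step flagged in Remark (i), and it is where the boundary hypothesis $\nu\times\mathbf\Phi=0$ enters. For $\varphi\in C_c^\infty(\mathbb{R}^3)$ I would write $\langle\dv\mathbf\Psi,\varphi\rangle=-\int_{\mathbb{R}^3}\mathbf\Psi\cdot\nabla\varphi\,dx=-\int_\O\curl\mathbf\Phi\cdot\nabla\varphi\,dx$. The vector identity $\dv(\nabla\varphi\times\mathbf\Phi)=\mathbf\Phi\cdot\curl\nabla\varphi-\nabla\varphi\cdot\curl\mathbf\Phi=-\nabla\varphi\cdot\curl\mathbf\Phi$ (using $\curl\nabla\varphi=0$), combined with the divergence theorem on $\O$, converts this into the pure boundary integral $\int_{\pO}(\nabla\varphi\times\mathbf\Phi)\cdot\nu\,dS$. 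Rewriting the integrand by the scalar triple product as $(\nabla\varphi\times\mathbf\Phi)\cdot\nu=-\nabla\varphi\cdot(\nu\times\mathbf\Phi)$, I see that it vanishes pointwise on $\pO$ precisely because $\nu\times\mathbf\Phi=0$ there. Thus the boundary term is zero, $\langle\dv\mathbf\Psi,\varphi\rangle=0$ for every test function, and $\dv\mathbf\Psi=0$ on $\mathbb{R}^3$.

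I expect the only genuine obstacle to be the careful justification of this distributional computation rather than any deep difficulty: one must make sure the divergence theorem is legitimately applied to the $C^1$ field $\nabla\varphi\times\mathbf\Phi$ on $\O$, and that the identity $(\nabla\varphi\times\mathbf\Phi)\cdot\nu=-\nabla\varphi\cdot(\nu\times\mathbf\Phi)$ annihilates the \emph{entire} boundary integrand. It is exactly here, as Remark (i) emphasizes, that the hypothesis cannot be relaxed to $\nu\cdot\mathbf\Phi=0$: with only the normal component vanishing the factor $\nu\times\mathbf\Phi$ survives, the extended field picks up a nonzero divergence supported on $\pO$, and Lemma \ref{lem4.1} is no longer available. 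A minor technical point is that Lemma \ref{lem4.1} was stated for kernels $A\in C^1(\O\times\mathbb{R}^3)$ whereas here $A\in C^1(\bar\O\times\bar\O)$; since $\mathbf\Psi$ vanishes off $\bar\O$, only the values of $A$ on $\bar\O\times\bar\O$ enter the integral, so the bounds \eqref{6.1} are all that is needed and no extension of the kernel is required.
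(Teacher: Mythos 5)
Your proposal is correct and follows essentially the same route as the paper: a zero extension across $\pO$ reduces the claim to Lemma \ref{lem4.1}, with the hypothesis $\nu\times\mathbf\Phi=0$ ensuring that no singular boundary contribution spoils the divergence-free condition. The only cosmetic difference is that the paper extends $\mathbf\Phi$ itself by zero and invokes the distributional identity for $\curl\tilde{\mathbf\Phi}$, whereas you extend $\curl\mathbf\Phi$ directly and verify $\dv\mathbf\Psi=0$ by the same integration by parts, spelling out the step the paper leaves implicit.
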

\begin{proof}
Let $\tilde{\mathbf\Phi}$ be the zero extension
 of the vector $\mathbf\Phi$ outside of the $\O.$ Then
 $\curl \tilde{\mathbf\Phi}=0$ in $\mathbb R^3$
 in the sense of distribution, and we get
$$
\left\| \frac{1}{|x|}\int_{\O}
A(x,y)\curl \mathbf\Phi dy\right\|_{L^{1}(\O)}
= \left\| \frac{1}{|x|}\int_{\mathbb R^3}
A(x,y)\curl \tilde{\mathbf\Phi} dy\right\|_{L^{1}(\O)}.
$$
From Lemma \ref{lem4.1}, it follows that
$$
\left\| \frac{1}{|x|}\int_{\O}
A(x,y)\curl \mathbf\Phi dy\right\|_{L^{1}(\O)}
\leq C \left\|\curl \tilde{\mathbf\Phi}\right\|_{L^1(\mathbb R^3)}
\leq C \left\|\curl \mathbf\Phi\right\|_{L^1(\O)}.
$$

\end{proof}

We now give the proof of the main theorem.

\begin{proof}[Proof of Theorem \ref{thm1.1}]

From the Helmholtz-Weyl decomposition (see \cite[Theorem 2.1]{KY}),
for every $\u\in C^1(\bar{\O})$ there
 exists a decomposition
\begin{equation}\label{4.12}
 \u=\nabla p_{\u}+\curl \w_{\u},
\end{equation}
where the function $p_{\u}\in W^{2,p}(\O) $ satisfying
\begin{equation}\label{4.13}
\Delta p_{\u}=\dv\u\q\text{ \rm in } \O,
\quad\qq p_{\u}=0\q\text{ \rm on }\p\O;
\end{equation}
the vector $\w_{\u}\in X^{2,p}_{n}(\O)$ with $X^{2,p}_{n}$ defined by
$$
 X^{2,p}_{n}(\O)\equiv\left\{\w\in
W^{2,p}(\O)~:~\dv\,\w=0,~ \nu\cdot\w=0 \text{ on }\p\O \right\}
$$
and the vector $\w_{\u}$ satisfying the elliptic system
\begin{equation}\label{4.14}
\begin {cases}
\curl\curl \w_u=\curl\u & \text{ \rm in }\O,\\
\dv\w_u=0 &\text{ \rm in }  \O,\\
\nu\times\curl\w_{\u}=\nu\times\u=0 &\text{ \rm on }  \p\O,\\
\nu\cdot\w_{\u}=0 &\text{ \rm on }  \p\O.
\end {cases}
\end{equation}

By the classical elliptic equation theory, we see that
the solution $p_{\u}$ of the equation \eqref{4.13} has the form
$$ p_{\u}=\int_{\O} G_1(x,y) \dv\u(y) d(y)
$$
and Green's function $G_1(x,y)$ satisfies the
inequality \eqref{6.1}. Lemma \ref{lemma2.2} gives
\begin{equation}\label{4.17}
\left\|\frac{\nabla p_{\u}}{|x|}\right\|_{L^1(\O)}
\leq C \left\|(1+\ln|x|)\dv\u\right\|_{L^1(\O)}.
\end{equation}

We now estimate the term involving the operator curl. Note that the elliptic system \eqref{4.14}
is of Petrovsky type (see Solonnikov \cite{SO1,SO2}, also see
the reference \cite[p.606]{BVB} by Beir\~{a}o da Veiga and Berselli). Therefore,
the solution of \eqref{4.14} can be expressed by
$$\w(x)=\int_{\O} \mathcal {G}(x,y) \curl\u(y) dy.
$$
where the Green's matrix $\mathcal {G}(x,y)$ is written as
$$
\mathcal {G}(x,y)=\mathbf G_2(x,y)+\mathbf R(x,y).
$$
The leading term $\mathbf G_2(x,y)$ satisfies the estimate (see \cite[p.608]{BVB})
\begin{equation}\label{4.15}
 \left|D_x^{\alpha} D_y^{\beta}\mathbf G_2(x,y)\right|
\leq \frac{C(\alpha,\beta,\O)}{|x-y|^{\alpha+\beta+1}}
\end{equation}
and the matrix $\mathbf R(x,y)$ satisfies (see \cite[p.610]{BVB})
\begin{equation}\label{4.16}
\left|D_x^{\alpha} D_y^{\beta}\mathbf R(x,y)\right|
\leq \frac{C(\alpha,\beta,\O)}{|x-y|^{\alpha+\beta+1+\gamma}}\q\q \text{with } \gamma>0.
\end{equation}
Note that
$$
\curl\w(x)=\int_{\O} \curl_x (G^1,G^2, G^3)(x,y)\curl\u(y) dy
$$
where $(G^1,G^2, G^3)=\mathcal {G}(x,y)^{T}$ is the
row vector of the matrix $\mathcal {G}(x,y)$.
Then by the estimates \eqref{4.15} and \eqref{4.16}, Lemma \ref{2.2}
is applicable for $\curl\w(x),$ and we have
\begin{equation}\label{4.18}
\left\|\frac{\curl\w_{\u}}{|x|}\right\|_{L^1(\O)}
\leq C \left\|\curl\u\right\|_{L^1(\O)}.
\end{equation}
Combing the estimates \eqref{4.17} and \eqref{4.18}, we complete the proof.
\end{proof}

\section{Hardy-type inequalities for vector fields with $L^p$ data}

To show the Hardy-type inequalities for vector fields
with $L^p$ data, we first give
the estimate on the vector field itself by the $L^1$ norm
of the operators div and curl.

\begin{Lem}\label{lem2.3}
Assume that the domain $\O$ satisfies the assumption $(O),$
and let  $\u\in C^{1}(\bar{\O})$ with $\nu\times\u=0$ on $\p\O$.
 Then for any $1\leq p<3/2$ we have
\begin{equation}\label{2.3}
\left\| \u\right\|_{L^{p}(\O)}
\leq C(p, \O)\left(\|\dv\u\|_{L^1(\O)}+\|\curl\u\|_{L^{1}(\O)}\right),
\end{equation}
where  the  constant $C$ depends only on $p$ and the domain $\O.$
\end{Lem}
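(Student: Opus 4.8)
The plan is to reuse the Helmholtz--Weyl decomposition from the proof of Theorem \ref{thm1.1}, but now to measure each piece directly in $L^p(\O)$ instead of in the weighted $L^1$ norm. Writing $\u=\nabla p_{\u}+\curl\w_{\u}$ as in \eqref{4.12}, it is enough to prove
\[
\|\nabla p_{\u}\|_{L^p(\O)}\le C(p,\O)\,\|\dv\u\|_{L^1(\O)},
\qquad
\|\curl\w_{\u}\|_{L^p(\O)}\le C(p,\O)\,\|\curl\u\|_{L^1(\O)},
\]
and then to add the two bounds. Since the splitting is driven by $\dv\u$ and $\curl\u$ respectively, this is the same bookkeeping as in Theorem \ref{thm1.1}, only with a different target norm.

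For the gradient part I would use the integral representation $\nabla p_{\u}(x)=\int_\O \nabla_x G_1(x,y)\,\dv\u(y)\,dy$ coming from \eqref{4.13}, where the gradient of the Dirichlet Green's function satisfies $|\nabla_x G_1(x,y)|\le C|x-y|^{-2}$, the same kernel bound \eqref{6.1} already used in the proof of Theorem \ref{thm1.1}. For the curl part, the representation $\curl\w_{\u}(x)=\int_\O \curl_x\mathcal{G}(x,y)\,\curl\u(y)\,dy$ together with the Green's-matrix estimates \eqref{4.15}--\eqref{4.16} yields, after one $x$-derivative, a kernel again dominated by $C|x-y|^{-2}$, exactly as invoked there. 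Thus both quantities have the form $\int_\O K(x,y)g(y)\,dy$ with $|K(x,y)|\le C|x-y|^{-2}$ and $g\in L^1(\O)$, and the lemma reduces to a single mapping property.

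The core estimate to establish is: if $|K(x,y)|\le C|x-y|^{-2}$ and $g\in L^1(\O)$, then $\|\int_\O K(\cdot,y)g(y)\,dy\|_{L^p(\O)}\le C(p,\O)\|g\|_{L^1(\O)}$ for every $p<3/2$. I would prove this by Minkowski's integral inequality,
\begin{equation*}
\left\|\int_\O K(\cdot,y)g(y)\,dy\right\|_{L^p(\O)}
\le C\int_\O |g(y)|\left(\int_\O |x-y|^{-2p}\,dx\right)^{1/p}dy,
\end{equation*}
followed by the observation that, since $\O$ is bounded, the inner integral is finite and bounded uniformly in $y\in\O$ precisely when $2p<3$: passing to polar coordinates centred at $y$ turns it into $\int_0^{R}r^{2-2p}\,dr$, with $R$ the diameter of $\O$, which converges exactly for $p<3/2$. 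This gives $\sup_{y\in\O}\left(\int_\O|x-y|^{-2p}\,dx\right)^{1/p}\le C(p,\O)$, hence the claimed bound.

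The point that fixes the admissible range of $p$ --- and the only real obstacle --- is this integrability threshold. At $p=3/2$ the radial integral $\int_0^R r^{2-2p}\,dr$ becomes $\int_0^R r^{-1}\,dr$ and diverges, so the strong-type estimate fails at the endpoint and the argument genuinely requires $p<3/2$. This is the $L^p$ shadow of the fact that the order-one Riesz potential maps $L^1$ only into the weak space $L^{3/2,\infty}$; restricting to $p<3/2$ and using the boundedness of $\O$ upgrades that weak endpoint to a strong $L^p$ bound. Adding the gradient and curl estimates then completes the proof.
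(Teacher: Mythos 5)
Your argument is correct, but it follows a genuinely different route from the paper's. The paper does not use the Helmholtz--Weyl decomposition here; instead it starts from the fundamental theorem of vector calculus, writing $4\pi\u$ as a Newtonian-potential term in $\dv\u$, one in $\curl\u$, and a single-layer boundary potential $\nabla v$ built from $\nu\cdot\u$. The two volume potentials are handled exactly as you handle your kernels (Minkowski's integral inequality against $|x-y|^{-2}$, with the same $p<3/2$ threshold), but the boundary term forces a further detour: $\nu\cdot\u$ is re-expressed through a quantity $J$ and estimated in $W^{-1/p,p}(\p\O)$ via trace theorems, duality with $W^{1,q}$, and results imported from \cite{Xiang} and \cite{FM}. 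Your choice of the decomposition $\u=\nabla p_{\u}+\curl\w_{\u}$ from \eqref{4.12}--\eqref{4.14} eliminates that boundary term entirely, because both pieces come with homogeneous boundary conditions and admit pure volume representations $\int_{\O}K(x,y)g(y)\,dy$ with $|K(x,y)|\leq C|x-y|^{-2}$; everything then reduces to the single mapping property you isolate, and the hypothesis $\nu\times\u=0$ enters only to guarantee the homogeneous boundary data in \eqref{4.14}. What you give up is self-containedness: you lean on the Green's-matrix theory for the Petrovsky system \eqref{4.14} (Solonnikov, Beir\~{a}o da Veiga--Berselli) and on the pointwise bound $|\nabla_x G_1(x,y)|\leq C|x-y|^{-2}$ for the Dirichlet Green's function, whereas the paper's volume kernels are explicit. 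Since the paper already sets up all of that machinery for Theorem \ref{thm1.1}, your proof is arguably the more economical one in context, and it correctly identifies $p<3/2$ as the exact integrability threshold (the strong-type estimate does fail at $p=3/2$, where only a weak-type bound survives). One small point worth making explicit if you write this up: the absence of the harmonic part $\mathscr{H}_{\u}$ in \eqref{4.12} rests on the Betti-number hypothesis in assumption $(O)$, just as in the paper's proof of the main theorem.
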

\begin{proof}
From the fundamental theorem of vector calculus, it follows that
\begin{equation}\label{2.4}
4\pi\u=-\nabla\int_{\O}\frac{1}{|x-y|}\dv\u(y)
dy+\curl\int_{\O}\frac{1}{|x-y|}\curl\u(y)
dy+\nabla v,
\end{equation}
where the function $v$ is defined by
$$v(x)=\int_{\p\O}\frac{1}{|x-y|}\nu\cdot\u(y)\mathrm{d}S_y.
$$
We estimate each term in \eqref{2.4}.
The $L^p$ estimates on the first and the second terms
in the right side of \eqref{2.4} can be obtained
by applying the Minkowski's integral inequality.
Therefore, it is necessary to show the  estimate on
$v(x)$.
By Lemma A.1 in Appendix in \cite{Xiang}
and by Claim 1 in the proof
of Theorem 1.1 in \cite{Xiang}, for $1<p<3/2$ we have
\begin{equation}\label{2.5}
\left\| \nabla v(x)
\right\|_{L^{p}(\O)}\leq C(p,\O)\|\nu\cdot\u\|_{W^{-1/p,p}(\p\O)}
\leq C(p,\O)\|J\|_{W^{-1/p,p}(\p\O)},
\end{equation}
where $J$ is defined by
$$J=2\pi\left(\nu,\mathrm{grad}\int_{\O}\frac{1}{r}\dv\u
\mathrm{d}y-\curl\int_{\O}\frac{1}{r}\curl\u \mathrm{d}y \right).
$$

We now estimate $J,$ and let
$$w=\int_{\O}\frac{1}{|x-y|}\dv\u
\mathrm{d}y.
$$
 Green's formula and the trace theorem in $W^{1,q}(\O)$ yield that
\begin{equation}\label{2.6}
\left\|\frac{\p w}{\p\nu}\right\|_{W^{-1/p,p}(\p\O)}\leq
C(p,\O)\left(\left\|\nabla w\right\|_{L^{p}(\O)}
+\|\dv\u\|_{(W^{1,q}(\O))^{*}}\right),
\end{equation}
where $q$ is the exponent conjugate to $p$,
$(\cdot)^{*}$ denotes the dual space of $(\cdot).$
Using the Minkowski's integral inequality again, we get
\begin{equation}\label{2.7}
\left\|\nabla w\right\|_{L^{p}(\O)}
\leq C(p,\O)\|\dv\u\|_{L^1(\O)}.
\end{equation}
Since $W^{1,r}$  is continuously embedded
into $L^{\infty}$ for  $r>3,$ $L^1$ is
continuously embedded into  $(W^{1,r}(\O))^{*}.$
Hence, for any  $1<p<3/2$, \eqref{2.6}
and \eqref{2.7} yield that
\begin{equation}\label{2.8}
\left\|\frac{\p }{\p\nu}\int_{\O}\frac{1}{|x-y|}
\dv\u\right\|_{W^{-1/p,p}(\p\O)}
\leq C\|\dv\u\|_{L^1(\O)}.
\end{equation}
The trace theorem (\cite[Lemma 1]{FM})
in the space $\{\u\in L^p(\O), \dv\u\in L^p(\O)\}$ gives
$$
\left\|\left(\nu,\curl\int_{\O}\frac{1}{r}\curl\u
\mathrm{d}y\right)\right\|_{W^{-1/p,p}(\p\O)} \leq
C(p,\O)\left\|\curl\int_{\O}\frac{1}{r}\curl\u
\mathrm{d}y\right\|_{L^{p}(\O)}.
$$
Thus the Minkowski's integral inequality, for $1<p<3/2$, implies that
\begin{equation}\label{2.9}
\left\|\left(\nu,\curl\int_{\O}\frac{1}{r}\curl\u
\mathrm{d}y\right)\right\|_{W^{-1/p,p}(\p\O)}
\leq  C(p,\O)\|\curl\u\|_{L^{1}(\O)}.
\end{equation}
 From \eqref{2.8} and \eqref{2.9}, we obtain the estimate on $J$,
\begin{equation}\label{2.10}
\|J\|_{W^{-1/p,p}(\p\O)}\leq C(p,\O)\left(\|\dv\u\|_{L^1(\O)}
+\|\curl\u\|_{L^{1}(\O)}\right).
\end{equation}
Thus  from \eqref{2.5} and \eqref{2.10} we obtain \eqref{2.3}
if $1<p<3/2$. For $p=1$ we apply H\"{o}lder's inequality in \eqref{2.3}, and
hence we finish our proof.
\end{proof}

We now show the Hardy-type inequalities with $L^p$ data for the vectors.
\begin{Thm}\label{thm3.2}
Assume that the domain $\O$ satisfies
the assumption $(O)$ with the origin ${\bf 0}\in\bar{\O}$.
Let $\u\in W^{1,p}(\O, \mathbb R^3)$ with $1<p<3/2$ and $\nu\times\u=0$
on $\p\O$ in the sense of trace.
Then there exists a positive constant
$C$ depending only on $p$
 and the domain $\O$ such that
\begin{equation}\label{6.22}
\left\| \frac{\u}{|x|}\right\|_{L^{p}(\O)}
\leq C(p, \O)\left(\left\|\dv\u\right\|_{L^p(\O)}
+\|\curl \u\|_{L^p(\O)}\right).
\end{equation}
\end{Thm}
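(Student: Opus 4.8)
The plan is to decouple the singular weight from the vector-analytic content. I would reduce inequality \eqref{6.22} to the \emph{unweighted} full-gradient bound
\[
\|\u\|_{W^{1,p}(\O)}\le C(p,\O)\left(\|\dv\u\|_{L^p(\O)}+\|\curl\u\|_{L^p(\O)}\right),
\]
and then remove the weight by the classical Hardy inequality on the whole space, which costs exactly one derivative and is available throughout $1<p<3$. The decisive point is that once one works with the weight $|x|^{-1}$ on all of $\mathbb{R}^3$, the location of the origin is irrelevant; this is precisely what makes the case $\mathbf{0}\in\p\O$ accessible by the same argument as the interior case.

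For the passage from the gradient bound to \eqref{6.22}, I would use the extension operator of the $C^2$ domain to obtain $\tilde{\u}\in W^{1,p}(\mathbb{R}^3)$ with compact support and $\|\tilde{\u}\|_{W^{1,p}(\mathbb{R}^3)}\le C\|\u\|_{W^{1,p}(\O)}$. Since $\tilde{\u}=\u$ on $\O$ and $1<p<3/2<3$, the classical Hardy inequality (valid for compactly supported $W^{1,p}$ fields by density of $C_c^\infty$) gives
\[
\left\|\frac{\u}{|x|}\right\|_{L^p(\O)}\le\left\|\frac{\tilde{\u}}{|x|}\right\|_{L^p(\mathbb{R}^3)}\le\frac{p}{3-p}\,\|\nabla\tilde{\u}\|_{L^p(\mathbb{R}^3)}\le C\|\u\|_{W^{1,p}(\O)}.
\]
Because $p<3$, the weight $|x|^{-p}$ is locally integrable near the origin, so no borderline difficulty arises here.

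The gradient bound itself I would obtain from the Helmholtz--Weyl decomposition \eqref{4.12} already used for Theorem \ref{thm1.1}: $\u=\nabla p_{\u}+\curl\w_{\u}$. The scalar $p_{\u}$ solves the Dirichlet problem \eqref{4.13}, so the standard Calder\'on--Zygmund/Agmon--Douglis--Nirenberg estimate yields $\|p_{\u}\|_{W^{2,p}(\O)}\le C\|\dv\u\|_{L^p(\O)}$ for all $1<p<\infty$, whence $\|\nabla p_{\u}\|_{W^{1,p}(\O)}\le C\|\dv\u\|_{L^p(\O)}$. The vector $\w_{\u}$ solves the curl--curl (Petrovsky-type) system \eqref{4.14}, and the elliptic theory invoked earlier (Solonnikov, and Beir\~ao da Veiga--Berselli) provides $\|\w_{\u}\|_{W^{2,p}(\O)}\le C\|\curl\u\|_{L^p(\O)}$, hence $\|\curl\w_{\u}\|_{W^{1,p}(\O)}\le C\|\curl\u\|_{L^p(\O)}$. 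Summing the two pieces gives the gradient bound, and combining with the Hardy step completes the proof. I expect the genuine obstacle to be exactly this $W^{2,p}$ a priori estimate for \eqref{4.14} under the boundary conditions $\nu\times\curl\w_{\u}=0$, $\nu\cdot\w_{\u}=0$: one must check that the complementing (ellipticity) conditions hold and that the kernel of the system is trivial so that the lower-order term can be absorbed. Here the hypothesis $(O)$, that the second Betti number vanishes, is what rules out nonzero harmonic fields and thus guarantees a trivial kernel.

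I note that this route in fact delivers \eqref{6.22} on the whole range $1<p<3$; the restriction $1<p<3/2$ in the statement is the natural range of the companion $L^1$-data results, e.g.\ Lemma \ref{lem2.3}. An alternative, closer in spirit to the representation \eqref{2.4}, would treat the two Newtonian-potential terms by the weighted Hardy--Littlewood--Sobolev (Stein--Weiss) inequality and estimate the single-layer term $\nabla v$ separately; along that path the threshold $3/2$ reappears through the embedding $W^{1,p'}\subset L^\infty$ (i.e.\ $p'>3$) needed for the boundary term, which would then be the hard part.
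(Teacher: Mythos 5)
Your proof is correct, but it takes a genuinely different route from the paper's. The paper argues by localization: it splits into the cases $\mathrm{dist}(\mathbf 0,\p\O)>0$ and $\mathbf 0\in\p\O$, and in the boundary case it straightens $\p\O$ in principal-curvature coordinates, extends $\u$ to the lower half-space by reflecting the tangential components oddly and the normal component evenly (this is where $\nu\times\u=0$ is used, to keep the extension in $W^{1,p}$), and then applies the whole-space Hardy inequality together with the $\mathbb R^3$ div--curl gradient estimate to the reflected field, controlling the div and curl of the reflection by $\dv\u$, $\curl\u$ and an $\epsilon\|\nabla\u\|_{L^p}$ error coming from the curvilinear metric. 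You instead prove the unweighted bound $\|\u\|_{W^{1,p}(\O)}\le C\left(\|\dv\u\|_{L^p(\O)}+\|\curl\u\|_{L^p(\O)}\right)$ once, extend by a generic Sobolev extension operator, and invoke Hardy on $\mathbb R^3$, which is indifferent to whether the origin lies on $\p\O$. Since the paper's own argument already relies on that same gradient estimate (explicitly in Case (i), and in Case (ii) to dispose of the $4\epsilon\|\nabla\u\|_{L^p}$ term), your route is strictly shorter and, as you observe, yields \eqref{6.22} on the full range $1<p<3$; what the paper's reflection buys is direct control of the div and curl of the extension by those of $\u$, a structure that would matter if the full gradient bound were unavailable (as at $p=1$) but is not actually exploited in the stated range. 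One small correction to your closing remark: the vanishing of the second Betti number in assumption $(O)$ is not about the kernel of the system \eqref{4.14} --- its homogeneous solutions are harmonic fields with $\nu\cdot\mathbf h=0$, governed by the first Betti number, and in any case they do not affect $\curl\w_{\u}$ --- rather, it eliminates harmonic fields with vanishing tangential trace, i.e.\ the harmonic term in the decomposition \eqref{4.12}, and that is where it is genuinely needed for your gradient bound on $\u$ itself.
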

\begin{proof}
Since the domain is in $C^{2}$ class,
there exist a positive constant $\epsilon$
being sufficiently small and a
$C^{2}$ diffeomorphism that
straightens the boundary in the neighborhood
       $$\mathcal {N}:=\left\{x\in\O:\mathrm{dist}(x,\p\O)
       \leq \epsilon\right\}
       $$
of the boundary. The number $\epsilon$ depends on the domain.

Case (i): $d:=\mathrm{dist}(\mathbf{0},\p\O)>0.$
Decomposing the integral and then by Minkowski's inequality, we get
\begin{equation}\label{2.23}
\left\|\frac{\u}{|x|}\right\|_{L^{p}(\O)}\leq
\left\| \frac{\eta_1\u}{|x|}\right\|_{L^{p}(\O)}
    +\left\| \frac{(1-\eta_1)\u}{|x|}\right\|_{L^{p}(\O)},
\end{equation}
where the function $\eta_1$ is defined by
\begin{equation}\label{5.12}
0\leq \eta_1(x)\leq 1,\q \eta_1(x)=1 \q \text{in } B_{d/2},
\q \supp\eta_1 \subset B_{d},
\q |D\eta_1|\leq \frac{4}{d}.
\end{equation}
For the first term in the right side of \eqref{2.23}, applying
the classical Hardy inequality and using the $W^{1,p}$ estimate
for vector fields with the tangential components on the boundary
vanishing, we have
$$\left\| \frac{\eta_1\u}{|x|}\right\|_{L^{p}(\O)}
\leq C(p,\O) \left\| \nabla(\eta_1\u)\right\|_{L^{p}(\O)}
\leq C(p, \O)\left(\left\|\dv\u\right\|_{L^p(\O)}
+\|\curl \u\|_{L^p(\O)}\right).
$$
The estimate on the last term of \eqref{2.23} is easy to obtain
since $(1-\eta_1)\u=0$ in the neighborhood of the origin, and thus
$$\left\| \frac{(1-\eta_1)\u}{|x|}\right\|_{L^{p}(\O)}
\leq C(p,\O) \left\| \u\right\|_{L^{p}(\O)}
\leq C(p, \O)\left(\left\|\dv\u\right\|_{L^p(\O)}
+\|\curl \u\|_{L^p(\O)}\right).
$$
In this case, the inequality \eqref{6.22} follows immediately.

Case (ii): $\mathbf{0}\in\p\O.$ We now consider the problem in the
neighborhood $\mathcal {N}$ of $\mathbf{0}$ and
 take the grid of the curvature lines as the
curvilinear coordinate system.
Note that the forms of  the divergence and the $\curl$
are invariant under orthogonal transformations. Therefore,
without loss of generality,  we assume the unit
inward normal vector of $\p\O$ at the
point $\mathbf{0}$ is $k=(0,0,1).$
 We introduce new variables $y_1$ and
$y_2$ such that $\r(y_1,y_2)$ represents  the portion of $\p\O$ near
$\mathbf{0}$ with $\r(0,0)=\mathbf{0}$ and the $y_1-$ and $y_2-$curves on $\p\O$
are the lines of principle curvatures. By the rotations, we assume one of the
principle direction at $\mathbf{0}$ is $\e_1=i=(1,0,0),$ the
other principle direction is $\e_2=j=(0,1,0).$
We take the diffeomorphism map $\mathcal {F}$ near boundary as
follows:
\begin{equation}\label{6.12}
x=\mathcal {F}(y)=\r(y_1,y_2)+y_3\n(y_1,y_2),
\end{equation}
where $\n(y_1,y_2)$ is the unit inner normal vector at the point
$\r(y_1,y_2)\in \p\O.$ Let
$$
G_{ij}(y)=\p_i\mathcal
{F}(y)\cdot\p_j\mathcal {F}(y),\q G(y)=G_{11}(y)G_{22}(y).
$$
Then we get
\begin{equation}\label{6.13}
G_{ii}(y)=1+o(|y|).
\end{equation}
Let $\hat{\u}(y)$ be the representation of the vector
$\u(x)$ under the new coordinate framework $\left\{\E_1,
\E_2, \E_3\right\}$, that is
$$\hat{\u}(y)=\hat{u}_1(y)\E_1+\hat{u}_2(y)\E_2+\hat{u}_3(y)\E_3=\u(x).
$$

Let $\O_{t}=\O\bigcap B_{t}(X_0)$ with $B_r=B_r(\bf 0)$ the ball
 center $\bf 0$ and radius $r$, and assume
that $\mathcal {F}$  defined by \eqref{6.12}
maps some domain
$\tilde{\O}_{\epsilon}\subset \mathbb R^3_+$ onto a subdomain
$\O_{\epsilon},$ maps the domain
$\tilde{\O}_{\epsilon/2}\subset \mathbb R^3_+$ onto a subdomain
$\O_{\epsilon/2}$.
We decompose the integral by two parts:
\begin{equation}\label{6.25}
\left\|\frac{\u}{|x|}\right\|_{L^{p}(\O)}\leq
\left\| \frac{\eta_2\u}{|x|}\right\|_{L^{p}(\O)}
    +\left\| \frac{(1-\eta_2)\u}{|x|}\right\|_{L^{p}(\O)}.
\end{equation}
where $\eta_2(x)$ satisfies
$$0\leq \eta_2(x)\leq 1,\q \eta_2(x)=1 \q \text{in } \O_{\epsilon/2},
\q \supp\eta_2 \subset \bar{\O}_{\epsilon},
\q |D\eta_2|\leq \frac{4}{\epsilon}.$$

We extend the vector $\hat{\u}$ on $\tilde{\O}_{\epsilon}$ to the lower half space
\begin{equation}\label{3.15}
\tilde{\u}(y)=\begin{cases} (\hat{u}_1(y_1,y_2,y_3),
\hat{u}_2(y_1,y_2,y_3), \hat{u}_3(y_1,y_2,y_3))
 \q& \text{ if } y\in \tilde{\O}_{\epsilon}\subset\Bbb R_{+}^3, \\
 (-\hat{u}_1(y_1,y_2,-y_3), -\hat{u}_2(y_1,y_2,-y_3),
 \hat{u}_3(y_1,y_2,-y_3))& \text{ if } y
 \in -\tilde{\O}_{\epsilon}\subset \Bbb R_{-}^3
\end{cases}
\end{equation}
and take the even extension for $\hat{\eta}_2(y)=\eta_2(x)$
 on $\tilde{\O}_{\epsilon}$
$$\tilde{\eta}_2(y)=\begin{cases} \hat{\eta}_2(y_1,y_2,y_3)
 \q& \text{ if } y\in \tilde{\O}_{\epsilon}\subset\Bbb R_{+}^3, \\
 \hat{\eta}_2(y_1,y_2,-y_3)\q& \text{ if } y
 \in -\tilde{\O}_{\epsilon}\subset \Bbb R_{-}^3.
\end{cases}
$$
Since $\tilde{\eta}_2\tilde{\u}\in W_0^{1,p}(\mathbb R^3)$, the classical Hardy inequality
gives that
\begin{equation}\label{2.25}
\left\|\frac{\tilde{\eta}_2\tilde{\u}}{|y|}\right\|_{L^{p}(\mathbb R^3)}\leq C\left\|D(\tilde{\eta}_2\tilde{\u})\right\|_{L^p(\mathbb{R}^3)}
\leq C\left(\left\|\dv_y(\tilde{\eta}_2\tilde{\u})\right\|_{L^p(\mathbb{R}^3)}
+\|\curl_y(\tilde{\eta}_2\tilde{\u})\|_{L^{p}(\mathbb{R}^3)}\right).
\end{equation}
The extensions of the vector $\hat{\u}$ and
the function $\hat{\eta}_2$ imply that
\begin{equation}\label{2.26}
\left\|\dv_y(\tilde{\eta}_2\tilde{\u})\right\|_{L^p(\mathbb{R}^3)}
\leq 2\left(\left\|\dv_y\hat{\u}\right\|_{L^p(\tilde{\O}_{\epsilon})}
+\left\|(\nabla\hat{\eta}_2) \hat{\u}\right\|_{L^p(\tilde{\O}_{\epsilon})}\right).
\end{equation}
The representation of the div operator under the
curvilinear coordinate system shows that
\begin{equation}\label{2.27}
\left\|\dv_y\hat{\u}\right\|_{L^p(\tilde{\O}_{\epsilon})}
\leq 4 \epsilon \|\nabla \u\|_{L^p(\O_{\epsilon})}+C(p, \O)\left(\left\|\dv_x\u\right\|_{L^p(\O_{\epsilon})}
+\|\u\|_{L^p(\O_{\epsilon})}\right).
\end{equation}
The last term of \eqref{2.26} can be estimated by
$$\left\|(\nabla\hat{\eta}_2) \hat{\u}\right\|_{L^p(\tilde{\O}_{\epsilon})}
\leq C(\epsilon,p,\O) \left\|\u\right\|_{L^p(\tilde{\O}_{\epsilon})}.
$$
Combing the above three inequalities, we now thus have
$$
\aligned
\left\|\dv_y(\tilde{\eta}_2\tilde{\u})\right\|_{L^p(\mathbb{R}^3)}
&\leq  4 \epsilon \|\nabla \u\|_{L^p(\O)}
+C(p, \epsilon, \O)\left(\left\|\dv_x\u\right\|_{L^p(\O)}
+\|\u\|_{L^p(\O)}\right)\\
&\leq C(p, \O)\left(\left\|\dv\u\right\|_{L^p(\O)}
+\|\curl\u\|_{L^p(\O)}\right),
\endaligned
$$
where we have chosen $\epsilon$ being small
and let $\epsilon $ be fixed.
Similarly, we can get the estimate for the curl part
$$
\left\|\curl_y(\tilde{\eta}_2\tilde{\u})\right\|_{L^p(\mathbb{R}^3)}
\leq C(p, \O)\left(\left\|\dv\u\right\|_{L^p(\O)}
+\|\curl\u\|_{L^p(\O)}\right).
$$
By the differmorphism mapping \eqref{6.12},
then using the estimates on the div part and the curl part we have
$$
\left\| \frac{\eta_2\u}{|x|}\right\|_{L^{p}(\O)}
\leq C(p)\left\|\frac{\tilde{\eta}_2\tilde{\u}}{|y|}\right\|_{L^{p}(\mathbb R^3)}
\leq C(p, \O)\left(\left\|\dv\u\right\|_{L^p(\O)}
+\|\curl\u\|_{L^p(\O)}\right).
$$
At last, combing the estimate on the last term of \eqref{2.25}
$$\left\| \frac{(1-\eta_2)\u}{|x|}\right\|_{L^{p}(\O)}
\leq  C(\epsilon,p,\O) \left\|\u\right\|_{L^p(\O)}
\leq C(p, \O)\left(\left\|\dv\u\right\|_{L^p(\O)}
+\|\curl\u\|_{L^p(\O)}\right),
$$
the inequality \eqref{6.22} follows and the proof is completed.
\end{proof}

\begin{Rem}\label{Rem2.5}
The conclusion of Theorem \ref{thm3.2} is still true for the vector fields
with the normal components on the boundary vanishing under the assumption
that the domain is simply-connected. The difference of the proof
is replaced the extension \eqref{3.15} by
$$
\tilde{\u}(y)=\begin{cases} (\hat{u}_1(y_1,y_2,y_3),
\hat{u}_2(y_1,y_2,y_3), \hat{u}_3(y_1,y_2,y_3))
 \q& \text{ if } y\in \tilde{\O}_{\epsilon}\subset\Bbb R_{+}^3, \\
 (\hat{u}_1(y_1,y_2,-y_3), \hat{u}_2(y_1,y_2,-y_3),
 -\hat{u}_3(y_1,y_2,-y_3))& \text{ if } y
 \in -\tilde{\O}_{\epsilon}\subset \Bbb R_{-}^3.
\end{cases}
$$
\end{Rem}

\subsection*{Acknowledgments.}
%The author would like to thank Professor Yoshikazu Giga for
%inviting him to visit The University of Tokyo in the spring semester of 2012 and for
%many illuminating discussions. This work was inspired by a question raised by Professor Giga.
%The author is also grateful to his supervisor Professor Xingbin Pan for some useful comments and suggestions.
The research work was partly supported by the
National Natural Science Foundation of China grant no. 11171111.

 \vspace {0.1cm}

\begin {thebibliography}{DUMA}

\bibitem[1]{BVB} H. Beir\~{a}o da Veiga, L.C. Berselli,
{\it Navier-Stokes equations: Green's matrices,
 vorticity direction, and regularity up to the boundary,}
 J. Differential Equations  {\bf 246}, (2009) 597-628.

\bibitem[2]{BB1} J. Bourgain, H. Brezis,
{\it  On the equation $\dv Y = f$
and application to control of phases,}
J. Amer. Math. Soc. {\bf 16} (2), (2002) 393-426.

\bibitem[3]{BB2} J. Bourgain, H. Brezis,
{\it  New estimates for the Laplacian,
the div-curl, and related Hodge systems,}
C. R. Math. Acad. Sci. Paris {\bf 338}, (2004) 539-543.

\bibitem[4]{BB} J. Bourgain, H. Brezis,
{\it  New estimates for elliptic
equations and Hodge type systems,}
J. Eur. Math. Soc.  {\bf 9} (2), (2007) 277-315.

\bibitem[5]{BM} P. Bousquet, P. Mironescu,
{\it An elementary proof of an inequality of Mazy'a involving
$L^1$ vector fields,}
 Nonlinear Elliptic Partial Differential Equations (D. Bonheure,
M. Cuesta, E. J. Lami Dozo,
P. Tak$\acute{a}\breve{c}$, J. Van Schaftingen, and M. Willem, eds.),
Contemporary Mathematics, vol. {\bf 540},
 American Mathematical Society, Providence,
R. I., 2011, pp. 59-63.

\bibitem[6]{BS} P. Bousquet, J. Van Schaftingen,
{\it Hardy-Sobolev inequalities for vector fields and canceling
linear differential operators,} 2013, preprint.

\bibitem[7]{BV1} H. Brezis, J. Van Schaftingen,
 {\it Boundary estimates for elliptic systems with $L^1$-data,}
Calc. Var. Partial Diff. Eq. {\bf 30} (3), (2007) 369-388.

\bibitem[8]{FM} D. Fujiwara, H. Morimoto,
{\it An $L^r$-theorem of the Helmholtz decomposition of vector
fields,} J. Fac. Sci. Univ. Tokyo Sect. IA Math.
{\bf 24} (3), (1977) 685-700.

\bibitem[9]{KY}  H. Kozono,  T. Yanagisawa,
{\it $L^r$-variational inequality for vector
fields and the Helmholtz-Weyl decomposition in bounded domains,}
Indiana Univ. Math. J. {\bf 58}(4), (2009) 1853-1920.

\bibitem[10]{LS} L. Lanzani,  E. M. Stein,
{\it A note on div curl inequalities,}
 Math. Res. Lett. {\bf 12}(1), (2005) 57-61 .

\bibitem[11]{MA} V. Maz'ya,
{\it Estimates for differential operators of vector analysis
involving $L^1$-norm,}
J. Eur. Math. Soc.  {\bf 12} (1), (2010) 221-240.

\bibitem[12]{MM} I. Mitrea, M. Mitrea,
{\it A remark on the regularity of the div-curl system,}
 Proc. Amer. Math. Soc. {\bf 137}, (2009) 1729-1733.

 \bibitem[13]{SO1}  V.A. Solonnikov, {\it On Green's matrices for elliptic boundary
 problem,} I, Tr. Mat. Inst. Steklova {\bf 110}, (1970) 123-170.

\bibitem[14]{SO2}  V.A. Solonnikov, {\it On Green's matrices for elliptic boundary problem}
 II, Tr.Mat. Inst. Steklova {\bf 116}, (1971) 187-226.

\bibitem[15]{VS1} J. Van Schaftingen,
 {\it Estimates for $L^1$ vector fields,}
 C. R. Math. Acad. Sci. Paris {\bf 339}, (2004) 181-186.

\bibitem[16]{VS4} J. Van Schaftingen,
{\it Limiting fractional and Lorentz space estimates of differential forms,}
 Proc. Amer. Math. Soc. {\bf 138}(1), (2010) 235-240.

\bibitem[17]{W1}  W. von Wahl,  {\it Estimating $\nabla u$ by $\dv u$
and $\curl u$}, Math. Methods Appl. Sci. {\bf 15}, (1992) 123-143.

\bibitem[18]{Xiang} X. F. Xiang,
{\it $L^{3/2}$-Estimates of vector fields with
$L^1$ curl in a bounded domain,}
Calc. Var. Partial Diff. Eq. {\bf 46} (1), (2013) 55-74.

\end{thebibliography}

\end {document}